\title{Generalized Rédei rational functions and rational approximations over conics}
\newtheoremstyle{theorem}
  {10pt}		  
  {10pt}  
  {\sl}  
  {\parindent}     
  {\bf}  
  {. }    
  { }    
  {}     
\theoremstyle{theorem}
\newtheorem{theorem}{Theorem}
\newtheorem{proposition}[theorem]{Proposition}
\newtheorem{lemma}[theorem]{Lemma}
\newtheoremstyle{defi}
  {10pt}		  
  {10pt}  
  {\rm}  
  {\parindent}     
  {\bf}  
  {. }    
  { }    
  {}     
\theoremstyle{defi}
\newtheorem{definition}[theorem]{Definition}
\newtheorem{remark}[theorem]{Remark}
\newtheorem{example}[theorem]{Example}
\makeatletter\renewcommand{\section}{\@startsection{section}{1}{0mm}{\baselineskip}{\baselineskip}{\bf\normalsize\centering}}\makeatother
\begin{document}

\maketitle

\begin{center}
\author{Stefano Barbero$^1$, Umberto Cerruti$^2$, Nadir Murru$^3$\\
$^{1,2,3}$Department of Mathematics\\
University of Turin\\
8/10, Via Carlo Alberto, 10123, Torino, ITALY\\
$^1$stefano.barbero@unito.it\\
$^2$umberto.cerruti@unito.it\\
$^3$nadir.murru@unito.it}
\end{center} 
\vspace{2em}
{\bf Abstract:} In this paper we study a general class of conics starting from a quotient field. We give a group structure over these conics generalizing the construction of a group over the Pell hyperbola. Furthermore, we generalize the definition of Rédei rational functions in order to use them for evaluating powers of points over these conics. Finally, we study rational approximations of irrational numbers over conics, obtaining a new result for the approximation of quadratic irrationalities.\\
{\bf AMS Subj. Classification: 11B39, 11D09, 11J68}\\
{\bf Key Words:} continued fractions, groups over conics, quadratic irrationalities, rational approximations, Redéi rational functions 

\section{Introduction}

\thispagestyle{empty}

In a previous paper \cite{bcm} we defined an operation over $\mathbb{R}^{\infty}=\mathbb{R}\cup \left\{\infty\right\}$, which  allowed us to construct a group over the Pell hyperbola 
$$H_d=\{(x,y)\in\mathbb R^2:x^2-dy^2=1\}.$$
In this paper we want to work on a more general class of conics
$$E_{\mathbb F}(h,d)=\{(x,y)\in\mathbb F^2: x^2+hxy-dy^2=1\}\,$$
over an ordinary field $\mathbb F$. In this case the conics are not only hyperbolae, but they can be ellipses or parabolae, depending on the quantity $h^2+4d$ to be positive, negative or zero respectively. Here we study some properties of such conics and, in particular, we will see how to construct a group over them, generalizing the classic construction over the Pell hyperbola. Furthermore, in \cite{bcm} we showed how Rédei rational functions \cite{Redei} can be used over the Pell hyperbola in order to find solutions of the Pell equation in a new way. Rédei rational functions are very interesting and useful in number theory, finding many applications, e.g., in the permutations of finite field, in cryptography (for these applications and a good theory about Rédei rational functions see \cite{Lidl}) or in pseudorandom sequences \cite{Topu}. Here we obtain some polynomials which generalize the Rédei rational functions and which are usable over the conics $E_{\mathbb F}(h,d)$ in order to evaluate powers of points. Finally, considering $\mathbb F=\mathbb R$, we study approximations of irrational numbers through sequences of rational numbers which lie over conics.\\
First of all we see how we can obtain the conics $E_{\mathbb F}(h,d)$ starting from a simple quotient field.
\begin{definition}
Let $\mathbb F$ be a field and $x^2-hx-d$ an irreducible polynomial over $\mathbb F[x]$. We consider the quotient field
$$\mathbb{A}=\mathbb F[x]/(x^2-hx-d)\ .$$
For any two elements $a+bx, u+vx\in\mathbb{A}$, the product naturally induced is
$$(a+bx)(u+vx)=(au+bvd)+(bu+av+bvh)x\ ,$$
while the norm, the trace and the conjugate  of an element $a+bx\in\mathbb A$, respectively, are well defined as follows:
$$N(a+bx)=a^2+hab-db^2, \quad Tr(a+bx)=2a+hb,\quad \overline{a+bx}=(a+hb)-bx\ .$$
Indeed  $$(a+bx)(\overline{a+bx})=N(a+bx)\ .$$
Consequently the inverse of an element in $\mathbb A$ is
$$ (a+bx)^{-1}=\cfrac{\overline{a+bx}}{N(a+bx)}\ . $$
\end{definition}
Now we consider the group of the unitary elements of $\mathbb A^*=\mathbb A-\{0\}$
$$U=\{a+bx\in\mathbb A^* : N(a+bx)=1\}=\{a+bx\in\mathbb A^* : a^2+hab-db^2=1\}\ .$$
So we have a natural bijection between $U$ and the set of points 
$$E=E_{\mathbb F}(h,d)=\{(x,y)\in\mathbb{F}:x^2+hxy-dy^2=1\} \ ,$$ 
which induces the commutative product $\odot_E$ over $E$
$$(x,y)\odot_E(u,v)=(xu+yvd,yu+xv+yvh)\ ,\quad \forall (x,y),(u,v)\in E\ .$$
We immediately have the following
\begin{proposition}\label{egroup}
$(E,\odot_E)$ is an abelian group with identity $(1,0)$ and the inverse of an element $(x,y)\in E$ is
$$(x,y)^{-1}=(x+hy,-y)\ .$$
\end{proposition}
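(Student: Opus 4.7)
The natural plan is to transport the group structure from the unit group $U$ of $\mathbb{A}$ along the bijection $\phi\colon U\to E$ given by $\phi(a+bx)=(a,b)$, since the product $\odot_E$ was defined precisely so that $\phi$ intertwines the product in $\mathbb{A}$ with $\odot_E$. Hence once we know that $U$ is an abelian subgroup of $\mathbb{A}^*$, the proposition follows by transport of structure, and all that remains is to read off the identity and the inverse.

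First I would check that the norm is multiplicative, $N(\alpha\beta)=N(\alpha)N(\beta)$ for all $\alpha,\beta\in\mathbb{A}$. This can be verified by a direct expansion using the product formula $(a+bx)(u+vx)=(au+bvd)+(bu+av+bvh)x$, or more cleanly by combining $N(\alpha)=\alpha\overline{\alpha}$ with the easily verified relation $\overline{\alpha\beta}=\overline{\alpha}\,\overline{\beta}$, which in turn is a short computation from the explicit formula for conjugation. Multiplicativity of $N$ immediately gives closure of $U$ under multiplication, while $N(1)=1$ gives the identity. For inverses, when $\alpha=a+bx\in U$ the general inversion formula of the definition specializes to $\alpha^{-1}=\overline{\alpha}=(a+hb)-bx$, and a small computation (or the identity $N(\overline{\alpha})=N(\alpha)$) shows this lies in $U$. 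Commutativity of $U$ is inherited from commutativity of multiplication in $\mathbb{A}$.

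Now I would transport along $\phi$. Under $\phi$ the product on $U$ becomes exactly $\odot_E$ by construction, so $(E,\odot_E)$ is an abelian group isomorphic to $U$. The identity of $U$ is $1=1+0\cdot x$, whose image under $\phi$ is $(1,0)$. The inverse of $a+bx\in U$ is $(a+hb)-bx$, whose image is $(a+hb,-b)$; renaming $(a,b)$ as $(x,y)$ yields the claimed formula $(x,y)^{-1}=(x+hy,-y)$.

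There is no real obstacle: everything reduces to multiplicativity of $N$ and the bijection $\phi$. The only point requiring a little care is to observe that the product $\odot_E$ stated in the excerpt is genuinely the push-forward of multiplication in $\mathbb{A}$ under $\phi$, which is a direct inspection of the formula $(a+bx)(u+vx)=(au+bvd)+(bu+av+bvh)x$. Once this is noted, the proposition is essentially automatic.
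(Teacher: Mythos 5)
Your proposal is correct and follows exactly the route the paper intends: the paper presents the proposition as immediate precisely because $\odot_E$ is defined as the push-forward of multiplication on the unit group $U\subset\mathbb{A}^*$ along the natural bijection, and your verification of norm multiplicativity, the identity $1+0\cdot x\mapsto(1,0)$, and the inverse $\overline{a+bx}=(a+hb)-bx\mapsto(a+hb,-b)$ fills in the details the paper omits. No issues.
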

We can parametrically represent the conics $E$ using 
\begin{equation} \label{line} y=\cfrac{1}{m}(x+1)\ . \end{equation}
Considering $P=\mathbb{F} \cup \{\alpha\}$, where with $\alpha$ we indicate an element not in $\mathbb F$, we directly find the bijections
\begin{equation}\label{para} \begin{cases}\epsilon:P\rightarrow E \cr \epsilon:m\mapsto \left(\cfrac{m^2+d}{m^2+hm-d}\ , \cfrac{2m+h}{m^2+hm-d}\right)\quad \forall m \in \mathbb F \cr \epsilon(\alpha)=(1,0)\ , \end{cases} \end{equation}
and
$$\begin{cases} \tau:E\rightarrow P \cr \tau:(x,y)\mapsto \cfrac{1+x}{y} \quad \forall (x,y)\in E, \quad y\not=0 \cr \tau(1,0)=\alpha \cr \tau(-1,0)=-\cfrac{h}{2} \ , \end{cases}$$
i.e. $P$ is a parametric representation of $E$. Now, using $\epsilon$ and $\tau$, we can naturally induce  a commutative product $\odot_P$ over the representation $P$
$$ \tau(s,t)\odot_P\tau(u,v)=\epsilon^{-1}((x,y)\odot_E (u,v)), \quad \forall (s,t),(u,v)\in E \ .$$
In particular, $\alpha$ becomes the identity with respect to $\odot_P$ and
\begin{equation} \label{prod} a\odot_P b=\cfrac{d+ab}{h+a+b}\quad \forall a,b\in P \ ,\ a+b\not=-h\ . \end{equation}
If $a+b=-h$, we set $a\odot_P b=\alpha$, so $a$ corresponds to the inverse of $b$ over $(P,\odot_P)$, and clearly   
\begin{proposition}
$(P,\odot_P)$ is an abelian group.
\end{proposition}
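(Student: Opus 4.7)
The plan is to obtain the statement by pure transport of structure. The bijection $\epsilon\colon P \to E$ (with inverse $\tau$) was used to \emph{define} $\odot_P$ via
$$\tau(s,t)\odot_P \tau(u,v) = \tau\bigl((s,t)\odot_E(u,v)\bigr),$$
so by construction $\epsilon$ is an isomorphism of magmas between $(P,\odot_P)$ and $(E,\odot_E)$. Since $(E,\odot_E)$ is already an abelian group by Proposition \ref{egroup}, associativity, commutativity, the existence of a two-sided identity, and the existence of inverses transfer verbatim from $E$ to $P$. In particular, the identity in $P$ is $\tau(1,0)=\alpha$, as asserted.

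The one thing that genuinely requires checking is that the explicit closed-form (\ref{prod}), together with the convention $a\odot_P b=\alpha$ when $a+b=-h$, is consistent with this transported operation on \emph{all} of $P$, including the boundary points. For generic $a,b\in\mathbb F$ with $a+b\neq -h$ the verification is a direct computation: substitute $\epsilon(a)$ and $\epsilon(b)$ into the formula for $\odot_E$, apply $\tau$ to the result, and simplify; common factors in numerator and denominator cancel, leaving $\frac{d+ab}{h+a+b}$. When one of the operands is $\alpha$, the corresponding point in $E$ is $(1,0)$, and the identity property in $E$ gives the desired neutrality of $\alpha$ in $P$.

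The case $a+b=-h$ is the only one with any subtlety. Here one must verify that this algebraic relation captures exactly the inverse pairing in $P$, so that setting $a\odot_P b=\alpha$ is forced rather than arbitrary. Using the inverse formula $(x,y)^{-1}=(x+hy,-y)$ from Proposition \ref{egroup} and applying $\tau$ gives, for $y\neq 0$,
$$\tau(x+hy,-y)=\frac{1+x+hy}{-y}=-\frac{1+x}{y}-h=-\tau(x,y)-h,$$
so the inverse in $P$ of a parameter $a$ is $-a-h$, which is precisely the locus $a+b=-h$. The exceptional case $(-1,0)$ with $\tau(-1,0)=-h/2$ matches as well, being its own inverse under $b=-a-h$. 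Thus the piecewise definition of $\odot_P$ is internally consistent, and the conclusion follows.

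The only real obstacle is bookkeeping of the boundary cases involving $\alpha$ and the vanishing denominator in (\ref{prod}); there is no structural difficulty, since the abelian group axioms are imported wholesale from $(E,\odot_E)$ through the bijection $\epsilon$.
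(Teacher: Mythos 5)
Your proof is correct and follows the same route the paper implicitly takes: the paper offers no argument beyond ``clearly,'' relying on the fact that $\odot_P$ is defined by transporting $\odot_E$ through the bijection $\epsilon$, so the group axioms are inherited from Proposition \ref{egroup}. Your additional verification that the closed form (\ref{prod}) and the convention for $a+b=-h$ agree with the transported operation (via the computation $\tau(x+hy,-y)=-\tau(x,y)-h$) is a welcome filling-in of details the paper omits.
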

\begin{remark}
Let us consider the quotient group $B=\mathbb A^*/\mathbb F^*$, whose elements  
$$[a+bx]=\{\lambda a+\lambda bx:\lambda\in\mathbb F^*\}\ ,$$
correspond to the equivalence class  of $a+bx \in \mathbb A^*$ .
Of course if $b=0$, then $[a+bx]=[a]=[1_{\mathbb F^*}]\ $, and 
$$B=\{[a+x]:a\in \mathbb F\}\cup \{[1_{\mathbb F^*}]\}\ .$$
The product in $B$ is given by
$$[a+x][b+x]=[ab+ax+bx+x^2]=[(d+ab)+(h+a+b)x]\ ,$$
and, if $h+a+b\not=0$, then
$$[a+x][b+x]=[\cfrac{d+ab}{h+a+b}+x]\ ,$$
else
$$[a+x][b+x]=[d+ab]=[1_{\mathbb F^*}]\ ,$$
But comparing this with the product (\ref{prod}) we find an immediate isomorphism
$$\begin{cases} \phi:B\rightarrow P \cr \phi:[a+x]\mapsto a, \quad [a+x]\not=[1_{\mathbb F^*}] \cr \phi([1_{\mathbb F^*}])=\alpha \ , \end{cases}$$
which shows how $\odot_P$ can be induced in an alternative way, starting from the quotient group B.
\end{remark}

\section{Generalized Rédei rational functions}
The aim of this section is to show how Rédei rational functions can be generalized and how they are strictly related with the product $\odot_P$.
Let us recall that the $n$-th power of the  matrix
\begin{equation} \label{matrix} M=\begin{pmatrix} z+h & d \cr 1 & z  \end{pmatrix}, \quad h, d, z \in \mathbb F\ , \end{equation} 
can be determined employing two kinds of polynomials, $N_n=N_n(h,d,z)$ and $D_n=D_n(h,d,z)$, obtaining
$$M^n=\begin{pmatrix} N_n+hD_n & dD_n \cr D_n & N_n  \end{pmatrix}\ .$$
A direct calculation and an easy inductive proof can show that $N_n$ and $D_n$ are the terms of two linear recurrent sequences, as we point out in the next
\begin{remark} \label{n-d-rec}
If we indicate with $\mathcal{W}(a,b,r,k)$ the linear recurrent sequence over $\mathbb F$, with initial conditions $a, b$ and characteristic polynomial $t^2-rt+k$, then
$$N_n(h,d,z)=\mathcal{W}(1,z,2z+h,z^2+hz-d)$$
$$D_n(h,d,z)=\mathcal{W}(0,1,2z+h,z^2+hz-d)\ ,$$
\end{remark}
\noindent Moreover, in the following proposition we emphasize two important relations involving $N_n$ and $D_n$
\begin{proposition} \label{n+m} 
$$\begin{cases} N_{n+m}=N_nN_m+dD_nD_m  \cr D_{n+m}=D_nN_m+hD_nD_m+N_nD_m \ .\end{cases}$$
\end{proposition}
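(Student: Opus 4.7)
The plan is to exploit the matrix identity $M^{n+m}=M^nM^m$, which is immediate from associativity of matrix multiplication, and read the desired relations off as entries of the product. Since we already know
\[
M^k=\begin{pmatrix} N_k+hD_k & dD_k \\ D_k & N_k \end{pmatrix}
\]
for every nonnegative integer $k$, the left-hand side $M^{n+m}$ has $(2,2)$ entry $N_{n+m}$ and $(2,1)$ entry $D_{n+m}$, while the right-hand side is a $2\times 2$ product whose entries are explicit bilinear expressions in the $N_n, D_n, N_m, D_m$.

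Concretely, I would carry out the product
\[
\begin{pmatrix} N_n+hD_n & dD_n \\ D_n & N_n \end{pmatrix}
\begin{pmatrix} N_m+hD_m & dD_m \\ D_m & N_m \end{pmatrix}
\]
and extract the bottom row. The $(2,2)$ entry is $D_n(dD_m)+N_n N_m = N_nN_m+dD_nD_m$, and comparing with the $(2,2)$ entry of $M^{n+m}$ gives the first identity $N_{n+m}=N_nN_m+dD_nD_m$. The $(2,1)$ entry is $D_n(N_m+hD_m)+N_nD_m = D_nN_m+hD_nD_m+N_nD_m$, and comparing with the $(2,1)$ entry of $M^{n+m}$ gives the second identity.

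There is essentially no obstacle: the only routine check is that the matrix expression for $M^k$ stated just before the proposition really does hold for all $k$, which the authors have already justified by an inductive argument. As a sanity check one can also verify that the $(1,1)$ and $(1,2)$ entries of $M^nM^m$ reproduce $N_{n+m}+hD_{n+m}$ and $dD_{n+m}$ respectively, using the two identities just obtained; this is automatic and serves only to confirm consistency. Thus the proof reduces entirely to the bookkeeping of one matrix multiplication.
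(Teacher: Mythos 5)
Your proposal is correct and is essentially identical to the paper's proof: both rest on $M^{n+m}=M^nM^m$ and read the two identities off the bottom row of the resulting matrix product. The explicit entry computations you give are accurate, so nothing further is needed.
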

\begin{proof}
The proof is straightforward and only consists in comparing the resulting matrix on the right with the one on the left of the equality
$$ \begin{pmatrix} N_{n+m}+hD_{n+m} & dD_{n+m} \cr D_{n+m} & N_{n+m}  \end{pmatrix}=\begin{pmatrix} N_n+hD_n & dD_n \cr D_n & N_n  \end{pmatrix}\begin{pmatrix} N_m+hD_m & dD_m \cr D_m & N_m  \end{pmatrix}\ . $$
\end{proof}
\noindent We observe that
$$\det (M^n)=[\det(M)]^n=(z^2+hz-d)^n \ ,$$
on the other hand
$$\det(M^n)= N_n^2-hN_nD_n-dD_n^2=(z^2+hz-d)^n, $$
so, when $z^2+hz-d=1$, all points $(N_n, D_n)$ lie on $E_{\mathbb F}(h,d)\ .$\\
Finally, we can define the \emph{generalized Redèi rational functions} 
\begin{equation} \label{Q} Q_n(h,d,z)=\cfrac{N_n(h,d,z)}{D_n(h,d,z)} \quad \forall n\geq1\ . \end{equation}
Obviously, when $\mathbb F=\mathbb R$ and $h=0$, $Q_n(h,d,z)=Q_n(d,z)$, and we find the usual Rédei rational functions.
The $Q_n(h,d,z)$ have an interesting behaviour  with respect to $\odot_P$, which reveals some important aspects of these functions, in particular they can be viewed as a kind of powers. These facts are summarized in the following proposition and remarks.
\begin{proposition} For any $h,d,z \in \mathbb F$
$$Q_{n+m}(h,d,z)=Q_n(h,d,z)\odot_P Q_m(h,d,z)\ .$$
\end{proposition}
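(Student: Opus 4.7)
The plan is to read off the result directly from Proposition~\ref{n+m} by dividing numerator and denominator by $D_n D_m$. Explicitly, from
$$Q_{n+m}=\frac{N_{n+m}}{D_{n+m}}=\frac{N_n N_m+d\,D_n D_m}{D_n N_m+h\,D_n D_m+N_n D_m},$$
dividing top and bottom by $D_n D_m$ converts the right-hand side into
$$\frac{d+(N_n/D_n)(N_m/D_m)}{h+(N_n/D_n)+(N_m/D_m)}=\frac{d+Q_n Q_m}{h+Q_n+Q_m},$$
which is exactly $Q_n\odot_P Q_m$ by formula~(\ref{prod}).

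So the main body of the proof is: first recall Proposition~\ref{n+m}, which expresses $N_{n+m}$ and $D_{n+m}$ as quadratic forms in $(N_n,D_n)$ and $(N_m,D_m)$; then divide and identify the quotient with the product $\odot_P$ using the definition~(\ref{prod}). No induction or matrix computation is needed at this stage, since all the work has already been done in Proposition~\ref{n+m}.

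The only genuine subtlety is the treatment of the exceptional cases, i.e.\ when one of the $D_n, D_m$ vanishes (so that $Q_n$ or $Q_m$ equals the symbol $\alpha$ rather than an element of $\mathbb{F}$), or when $h+Q_n+Q_m=0$ (so that $\odot_P$ gives $\alpha$ by convention). In these cases the fractional manipulation above breaks down and has to be replaced by a direct check using the identities of Proposition~\ref{n+m}: for instance, if $D_n=0$ then $N_n=\pm 1$ on the conic $z^2+hz-d=1$, the identity element of $P$ is obtained when $N_n=1$, and the formulas of Proposition~\ref{n+m} collapse to $N_{n+m}=\pm N_m$, $D_{n+m}=\pm D_m$, giving $Q_{n+m}=Q_m=\alpha\odot_P Q_m$ as required. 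The case $h+Q_n+Q_m=0$ is handled symmetrically by showing that $D_{n+m}=0$ and $N_{n+m}\ne 0$, so that $Q_{n+m}=\alpha$ and the two sides agree under the convention stated just before Proposition~2.3.

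I expect no real obstacle here beyond bookkeeping of the exceptional values; the heart of the proof is a one-line division, and the content of the statement is essentially that Proposition~\ref{n+m} is precisely a matrix form of the group law $\odot_P$.
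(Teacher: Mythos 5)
Your proposal is correct and is essentially the paper's own proof read in the opposite direction: the paper starts from $Q_n\odot_P Q_m$, clears denominators by $D_nD_m$, and invokes Proposition~\ref{n+m}, which is exactly your one-line division. Your additional bookkeeping of the exceptional cases ($D_n=0$ or $h+Q_n+Q_m=0$) is more careful than the paper, which silently ignores them, but it does not change the approach.
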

\begin{proof}
Using Proposition \ref{n+m} we have
$$\cfrac{N_n}{D_n}\odot_P\cfrac{N_m}{D_m}=\cfrac{d+\frac{N_n}{D_n}\frac{N_m}{D_m}}{h+\frac{N_n}{D_n}+\frac{N_m}{D_m}}=\cfrac{dD_nD_m+N_nN_m}{hD_nD_m+D_mN_m+D_nN_m}=\cfrac{N_{n+m}}{D_{n+m}}\ .$$
\end{proof}
\begin{remark}
Since 
$$Q_1(h,d,z)=z \ ,$$
then
$$ Q_n(h,d,z)=z^{n_{\odot_P}}=\underbrace{z\odot_P \ldots \odot_P z}_{n-times} \ , $$
i.e., the functions $Q_n$ are intimately related to $\odot_P$, essentially being powers under this product, and the multiplicative property clearly holds for these functions 
$$Q_n(h,d,Q_m(h,d,z))=(Q_m(h,d,z))^{n_{\odot_P}}=(z^{m_{\odot_P}})^{n_{\odot_P}}=z^{nm_{\odot_P}}=Q_{nm}(h,d,z)\ .$$ 
\end{remark}
\begin{remark}
If we set $Q=\{Q_n(h,d,z), \forall n\}$, then $(Q,\odot_P,\circ)$, where $\circ$ is the usual composition between functions, is a commutative ring isomorphic to $(\mathbb Z,+,\cdot)$ and
$$Q_n(h,d,\cdot):(P,\odot_P)\rightarrow(P,\odot_P) \quad \forall n\geq1\ ,$$
are morphisms, since $Q_n(h,d,z)=z^{n_{\odot_P}}\ .$
\end{remark}
\section{ Evaluating powers of points}
What we have showed in the previous section allows us to focus the attention on points of $E$ and especially on their powers. We are ready to explain how powers of points belonging to $E$ can be evaluated, using the generalized Rédei rational functions $Q_n(h,d,z)$.\\ 
Let $(x,y)$ be a point of $E$, setting $(x_n,y_n)=(x,y)^{n_{\odot_E}}$, an inductive arguments and a little bit of calculation can be easily used to prove that
\begin{equation} \label{F-G} \begin{cases} x_n=\mathcal{W}(1,x,2x+hy,1)=F_n(h,x,y)=F_n \cr y_n=\mathcal{W}(0,y,2x+hy,1)=G_n(h,x,y)=G_n\ . \end{cases}\end{equation}
Now, finding what is the image of $(x,y)^{n_{\odot_E}}$ under $\tau$ , we have the equalities
 $$\tau((x,y)^{n_{\odot_E}})=\left(\cfrac{1+x}{y}\right)^{n_{\odot_P}}=Q_n\left(h,d,\cfrac{1+x}{y}\right)\ ,$$
and
$$\tau((x,y)^{n_{\odot_E}})=\tau(x_n,y_n)=\tau(F_n,G_n)=\cfrac{1+F_n}{G_n}\ .$$
Furthermore, recalling that we are working with points $(x,y)$ of $E$, satisfying $x^2+hxy-dy^2=1$, we can eliminate the dependence from $d$ 
$$q_n(h,x,y)=Q_n\left(h,\cfrac{1-hxy-x^2}{y^2}\ ,\cfrac{1+x}{y}\right)=\cfrac{1+F_n(h,x,y)}{G_n(h,x,y)}\ .$$
Finally, by Remark \ref{n-d-rec} and (\ref{F-G}) we have
\begin{equation}\label{tau} q_n(h,x,y)=\cfrac{1+N_n(hy,x^2+hxy-1,x)}{yD_n(hy,x^2+hxy-1,x)}\ . \end{equation}
From the last equality (\ref{tau}) we discover another interesting relation, proved in the following
\begin{proposition}
Given $(x,y)\in E_{\mathbb F}(h,d)$, then
$$ q_{2n}(h,x,y)=\cfrac{F_n(h,x,y)}{G_n(h,x,y)}\ . $$
\end{proposition}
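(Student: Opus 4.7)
The plan is to combine the relation $q_n(h,x,y) = (1+F_n)/G_n$ derived just before the statement with the doubling formulas for $F_n$ and $G_n$ that come from the group law on $E$. Since $q_{2n}(h,x,y) = (1+F_{2n})/G_{2n}$, everything will reduce to expressing $F_{2n}$ and $G_{2n}$ in terms of $F_n$ and $G_n$.

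First I would exploit that $(F_n, G_n) = (x,y)^{n_{\odot_E}}$, so
\[
(F_{2n},G_{2n}) = (F_n,G_n)\odot_E (F_n,G_n).
\]
Applying the product formula for $\odot_E$ given in the definition yields
\[
F_{2n} = F_n^2 + d\,G_n^2, \qquad G_{2n} = 2F_n G_n + h\,G_n^2 = G_n\bigl(2F_n + hG_n\bigr).
\]

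Next I would use the fact that $(F_n,G_n)\in E_{\mathbb F}(h,d)$, so $F_n^2 + h F_n G_n - d G_n^2 = 1$, equivalently $1 + d G_n^2 = F_n^2 + h F_n G_n$. Substituting this into the numerator of $q_{2n}$ gives
\[
1 + F_{2n} = 1 + F_n^2 + d G_n^2 = 2F_n^2 + h F_n G_n = F_n\bigl(2F_n + hG_n\bigr).
\]
Dividing by $G_{2n} = G_n(2F_n + hG_n)$ and cancelling the common factor $2F_n + hG_n$ yields $q_{2n}(h,x,y) = F_n/G_n$, as claimed.

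The only potential obstacle is the cancellation step, which fails if $2F_n + hG_n = 0$; but in that case $G_{2n}=0$, meaning $(F_n,G_n)$ is a $2$-torsion element of $E$ (its inverse $(F_n + hG_n, -G_n)$ coincides with itself), and the identity should be interpreted in the parametric set $P = \mathbb F \cup\{\alpha\}$ with the convention that both sides equal $\alpha$. Apart from this degenerate case, the proof is pure algebraic manipulation using only the group law on $E$ and the defining equation of the conic, so no further technicality is expected.
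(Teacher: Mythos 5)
Your proof is correct and follows essentially the same route as the paper's: the doubling formulas for the $n$-th power combined with the unit-norm condition, the only (cosmetic) difference being that you work directly with $F_n,G_n$ and the group law $\odot_E$ plus the conic equation $F_n^2+hF_nG_n-dG_n^2=1$, where the paper routes the same computation through $N_n$, $D_n$, Proposition~4 and $\det(M^n)=1$. One small slip in your closing aside: if $2F_n+hG_n=0$ with $G_n\neq 0$, then $(F_n,G_n)^{2_{\odot_E}}=(-1,0)$ rather than the identity, so the point is not $2$-torsion; since $\tau(-1,0)=-h/2=F_n/G_n$ the claimed identity actually survives in the sense of $\tau$, and in any case the paper silently ignores this degenerate division as well.
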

\begin{proof}
For the sake of simplicity, here we write $N_n$ and $D_n$ instead of $N_n(hy,x^2+hxy-1,x)$, $D_n(hy,x^2+hxy-1,x)$. By  (\ref{tau}), we have to show
$$\cfrac{1+N_{2n}}{yD_{2n}}=\cfrac{N_n}{yD_n}\ .$$
So we consider
$$\cfrac{1+N_{2n}}{D_{2n}}-\cfrac{N_n}{D_n}=\cfrac{D_n+D_nN_{2n}-N_nD_{2n}}{D_{2n}D_n}\ .$$
Clear consequences of Proposition \ref{n+m} are the relations
$$\begin{cases} N_{2n}=N_n^2+dD_n^2 \cr D_{2n}=2N_nD_n+hyD_n^2 \ ,  \end{cases}$$
where in this case $d=x^2+hxy-1$. Now we can easily evaluate the quantity $D_n+D_2N_{2n}-N_nD_{2n}$, finding
$$D_n+D_2N_{2n}-N_nD_{2n}=D_n+D_n(N_n^2+dD_n^2)-N_n(2N_nD_n+hyD_n^2)=$$
$$=D_n(1-N_n^2+dD_n^2-hyN_nD_n)=D_n(1-\det (M^n))=0 \ ,$$
since $\det (M^n)=x^2+hxy-d=x^2+hxy-x^2-hxy+1=1$, and $M$ is the matrix defined in (\ref{matrix}).
\end{proof}
\section{Approximations over conics}
An interesting research field involves the study of approximations of irrational numbers by sequences of rationals, which can be viewed as sequences of points over conics. In \cite{Burg} it has been proved that if a conic has a rational point, then there are irrational numbers $\beta$ such that there exists an infinite sequence of nonzero integer triples $(x_n,y_n,z_n)$ , where $\cfrac{x_n}{y_n}$ are rational approximations of $\beta$ and $\left(\cfrac{x_n}{z_n}, \cfrac{y_n}{z_n}\right)$ are rational points of the conic. Another interesting result has been proved in \cite{Els}, where rational approximations via Pythagorean triples has been studied, considering rational approximations $\cfrac{x}{y}$ of $\beta$ when  $x^2+y^2$ is a perfect square. The common point of these results is that auxiliary irrationals, depending on $\beta$, have been used and these auxiliary irrationals must have a continued fraction expansion with unbounded partial quotients (see Lemma 7 in \cite{Burg} and Theorem 1.1 in \cite{Els}). In this way, it is not possible to approximate, for example, quadratic irrationalities. Here, using powers of points studied in the previous section, we will see that we can approximate quadratic irrationalities and we do not have the problem of unbounded partial quotients.\\ 
In this section we will always consider $\mathbb F=\mathbb R$. First of all, we need the following
\begin{lemma}\label{limit}
Let $$(a_n)_{n=0}^{+\infty}=\mathcal{W}(a_0,a_1,2w,w^2-c) \quad \text{and} \quad (b_n)_{n=0}^{+\infty}=\mathcal{W}(b_0,b_1,2w,w^2-c)$$ be rational sequences, we have
$$ \lim_{n\rightarrow +\infty}\cfrac{a_n}{b_n}=\cfrac{a_1-a_0w+a_0\sqrt{c}}{b_1-b_0w+b_0\sqrt{c}}\ . $$
\end{lemma}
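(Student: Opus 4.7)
The plan is to use the explicit Binet formula for the two recurrent sequences and then simplify the ratio in the limit. Since the characteristic polynomial is $t^2 - 2wt + (w^2-c)$, it factors as
$$t^2 - 2wt + (w^2-c) = \bigl(t - (w+\sqrt{c})\bigr)\bigl(t - (w-\sqrt{c})\bigr),$$
so the roots are $\alpha = w+\sqrt{c}$ and $\beta = w-\sqrt{c}$, with $\alpha - \beta = 2\sqrt{c}$.

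Assuming $\alpha \neq \beta$ (i.e.\ $c\neq 0$), I would write
$$a_n = A_a\,\alpha^n + B_a\,\beta^n, \qquad b_n = A_b\,\alpha^n + B_b\,\beta^n,$$
and determine the coefficients from the initial conditions. From $a_0 = A_a + B_a$ and $a_1 = A_a\alpha + B_a\beta$, a direct computation gives
$$A_a = \frac{a_1 - a_0\beta}{\alpha-\beta} = \frac{a_1 - a_0 w + a_0\sqrt{c}}{2\sqrt{c}},$$
and analogously
$$A_b = \frac{b_1 - b_0 w + b_0\sqrt{c}}{2\sqrt{c}}.$$

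Then I would factor $\alpha^n$ out of both numerator and denominator to obtain
$$\frac{a_n}{b_n} = \frac{A_a + B_a(\beta/\alpha)^n}{A_b + B_b(\beta/\alpha)^n}.$$
Since $|\beta/\alpha| = |w-\sqrt{c}|/|w+\sqrt{c}| < 1$ (in the regime of interest, where $\alpha$ is the dominant root — implicitly assumed for the limit to exist), the terms $(\beta/\alpha)^n$ tend to zero, and therefore the ratio tends to $A_a/A_b$. Cancelling the common factor $1/(2\sqrt{c})$ yields exactly the claimed expression
$$\lim_{n\to+\infty}\frac{a_n}{b_n} = \frac{A_a}{A_b} = \frac{a_1 - a_0 w + a_0\sqrt{c}}{b_1 - b_0 w + b_0\sqrt{c}}.$$

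The only delicate point is the assumption on the dominant root: one must have $|\alpha|>|\beta|$ so that $(\beta/\alpha)^n\to 0$, and one must also assume $A_b \neq 0$ so the limit is well-defined. These are the standing hypotheses implicit in the statement. The degenerate case $c=0$ could be handled separately by using the repeated-root Binet formula $a_n = (a_0 + (a_1-a_0w)\,n/w)\,w^n$ (when $w\neq 0$), after which the linear-in-$n$ term dominates and the same formula is recovered; this is the one part where care is needed, but no calculation beyond the standard one is required.
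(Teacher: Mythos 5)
Your proof is correct and follows essentially the same route as the paper: Binet's formula with roots $w\pm\sqrt{c}$, solving for the leading coefficients from the initial conditions, and identifying the limit as their ratio. You are in fact slightly more careful than the paper, which silently assumes $|w+\sqrt{c}|>|w-\sqrt{c}|$, $c\neq 0$, and $B_1\neq 0$ (and carries a harmless spurious sign in its expressions for $A_1,B_1$ that cancels in the quotient).
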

\begin{proof}
For the Binet formula
$$ a_n=A_1(w+\sqrt{c})^n+A_2(w-\sqrt{c})^n \quad \text{and}\quad b_n=B_1(w+\sqrt{c})^n+B_2(w-\sqrt{c})^n \ ,$$
for every $n\geq0$ and $A_1,A_2,B_1,B_2\in \mathbb C$. So
$$ \lim_{n\rightarrow +\infty}\cfrac{a_n}{b_n}=\lim_{n\rightarrow +\infty}\cfrac{A_1(w+\sqrt{c})^n+A_2(w-\sqrt{c})^n}{B_1(w+\sqrt{c})^n+B_2(w-\sqrt{c})^n}=\cfrac{A_1}{B_1}\ . $$
We can find $A_1,B_1$ simply solving the systems
$$ \begin{cases}  A_1+A_2=a_0 \cr A_1(x+\sqrt{c})+A_2(x-\sqrt{c})=a_1 \end{cases} \quad \text{and}\quad \begin{cases}  B_1+B_2=b_0 \cr B_1(x+\sqrt{c})+B_2(x-\sqrt{c})=b_1 \ .\end{cases}$$
We obtain the values
$$ A_1=-\cfrac{a_1-a_0(x-\sqrt{c})}{2\sqrt{c}} \quad \text{and} \quad B_1=-\cfrac{b_1-b_0(x-\sqrt{c})}{2\sqrt{c}}\ , $$
from which the thesis easily follows.
\end{proof}
Now we can see that powers of points, which obviously lie over the conic from Proposition \ref{egroup}, converge to a quadratic irrationality.
\begin{theorem}\label{rational}
Given a rational point $(x,y)\in E$ and $(x_n,y_n)=(x,y)^{n_{\odot_E}}$, we have
$$\lim_{n\rightarrow\infty}\cfrac{y_n}{x_n}=\cfrac{2y}{\sqrt{h^2y^2+4hxy+4x^2-4}-hy}.$$
\end{theorem}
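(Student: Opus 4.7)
The plan is to apply Lemma~\ref{limit} directly to the two sequences $x_n$ and $y_n$. By the identities (\ref{F-G}), we have
\[
x_n = \mathcal{W}(1,x,2x+hy,1) \qquad\text{and}\qquad y_n = \mathcal{W}(0,y,2x+hy,1),
\]
so both sequences share the common characteristic polynomial $t^2-(2x+hy)t+1$. To put this in the form $t^2-2wt+(w^2-c)$ required by Lemma~\ref{limit}, I read off $w = x + \tfrac{hy}{2}$ and $w^2 - c = 1$, giving
\[
c = w^2 - 1 = x^2 + hxy + \tfrac{h^2 y^2}{4} - 1,
\]
so $2\sqrt{c} = \sqrt{4x^2 + 4hxy + h^2 y^2 - 4}$.

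Next, I feed the two sets of initial data into the lemma. For $y_n$ I take $(a_0,a_1)=(0,y)$, and for $x_n$ I take $(b_0,b_1)=(1,x)$. The lemma then yields
\[
\lim_{n\to\infty}\frac{y_n}{x_n} = \frac{a_1 - a_0 w + a_0\sqrt{c}}{b_1 - b_0 w + b_0\sqrt{c}} = \frac{y}{x - w + \sqrt{c}} = \frac{y}{-\tfrac{hy}{2} + \sqrt{c}}.
\]
Substituting the expression for $\sqrt{c}$ and multiplying numerator and denominator by $2$ produces exactly
\[
\frac{2y}{\sqrt{h^2 y^2 + 4hxy + 4x^2 - 4} - hy},
\]
which is the claimed value.

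There is essentially no obstacle: the only things to watch are (i) correctly matching the parametrization $r = 2w$, $k = w^2-c$ of the characteristic polynomial so that $w = x + hy/2$ rather than $w = 2x+hy$, and (ii) the algebraic simplification of $-hy/2 + \sqrt{c}$ into the form given in the statement. One should also note in passing that $c \geq 0$ (hence $\sqrt{c}$ is real) whenever $(x,y)\in E$ is such that the orbit $(x_n,y_n)$ is genuinely unbounded, i.e., when $|2x+hy|>2$; this is implicit in writing the limit, since otherwise the two characteristic roots would be complex conjugates of modulus $1$ and the limit would fail to exist.
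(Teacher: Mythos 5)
Your proposal is correct and follows exactly the paper's own argument: both identify $x_n$ and $y_n$ as the linear recurrences from (\ref{F-G}), match the characteristic polynomial to the form $t^2-2wt+(w^2-c)$ with $w=x+\tfrac{hy}{2}$ and $c=\tfrac{h^2y^2+4hxy+4x^2-4}{4}$, and apply Lemma~\ref{limit} with initial data $(0,y)$ and $(1,x)$. Your closing observation about needing $|2x+hy|>2$ for $\sqrt{c}$ to be real is a worthwhile caveat that the paper leaves implicit.
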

\begin{proof}
As we have seen in the previous section 
$$(x_n)_{n=0}^{+\infty}=\mathcal W(1,x,2x+hy,1),\quad (y_n)_{n=0}^{+\infty}=\mathcal W(0,y,2x+hy,1)\ .$$
Here we will use the Lemma \ref{limit}, where the initial conditions are 
$$x_0=0,\quad x_1=y,\quad y_0=1,\quad y_1=x\ ,$$
and we have the equalities
$$2w=2x+hy,\quad w^2-c=1 \ , $$
which give
$$w=\cfrac{2x+hy}{2}\ ,\quad c=\cfrac{h^2y^2+4hxy+4x^2-4}{4}\ .$$
Thus, by the Lemma \ref{limit},
$$\lim_{n\rightarrow\infty}\cfrac{y_n}{x_n}=\cfrac{a_1-a_0w+a_0\sqrt{c}}{b_1-b_0w+b_0\sqrt{c}}=\cfrac{2y}{\sqrt{h^2y^2+4hxy+4x^2-4}-hy}\ .$$ 
\end{proof}
\begin{example}
Let us consider the conic $E=E_{\mathbb R}(-13/4,2)$ and the rational point $(4,1)$ over this conic. The powers of this point with respect to $\odot_E$ are
$$\left(18,\cfrac{19}{4}\right),\left(\cfrac{163}{2},\cfrac{345}{16}\right),\left(\cfrac{2953}{8},\cfrac{6251}{64}\right),\left(\cfrac{53499}{32},\cfrac{113249}{256}\right),\ldots \ . $$
From the last Theorem \ref{rational}, we know that
$$\left(\cfrac{1}{4},\cfrac{19}{72},\cfrac{345}{1304},\cfrac{6251}{23624},\cfrac{113249}{427992},\ldots \right)=(0.25, 0.26388, 0.26457, 0.26460, 0.26460,\ldots)$$
are rational approximations of a quadratic irrationality, which in this case is
$$\cfrac{8}{13+3\sqrt{33}}\cong 0.264605\ldots \ .$$
\end{example}
Furthermore, we can see how it is easy to construct rational approximations for every irrational number such that these approximations form points over conics. Let us consider a conic $C$ with a rational parametrization, i.e.,
$$\begin{cases} x=f(m) \cr y=g(m)\ , \end{cases}$$
for any point $(x,y)\in C$ and $f, g$ rational functions. If we take any irrational number $\beta$, we are able to construct rational approximations $\cfrac{x_n}{y_n}$ of $\beta$ such that $(x_n,y_n)\in C$. Indeed, we have only to find the irrational number $\alpha$ such that
\begin{equation} \label{alpha}\cfrac{g(\alpha)}{f(\alpha)}=\beta \end{equation}
and then to consider the continued fraction expansion of $\alpha$. We recall that a continued fraction is a representation of a real number $\alpha$ through a sequence of integers as follows:
$$\alpha=a_0+\cfrac{1}{a_1+\cfrac{1}{a_2+\cfrac{1}{a_3+\cdots}}}\ ,$$
where the integers $a_0,a_1,...$ can be evaluated with the recurrence relations 
$$\begin{cases} a_k=[\alpha_k]\cr \alpha_{k+1}=\cfrac{1}{\alpha_k-a_k} \quad \text{if} \ \alpha_k \ \text{is not an integer}  \end{cases}  \quad k=0,1,2,...$$
for $\alpha_0=\alpha$ (see \cite{Olds}). A continued fraction can be expressed in a compact way using the notation $[a_0,a_1,a_2,a_3,...]$. The finite continued fraction
\begin{equation} \label{conv} [a_0,...,a_n]=\cfrac{p_n}{q_n}\ ,\quad n=0,1,2,...\end{equation}
is a rational number and is called the $n$--th \emph{convergent} of $[a_0,a_1,a_2,a_3,...]$. 
Now, if we consider the sequences $(p_n)_{n=0}^{+\infty}$ and $(q_n)_{n=0}^{+\infty}$, coming from (\ref{conv}), the sequences $(x_n)_{n=0}^{+\infty}$ and $(y_n)_{n=0}^{+\infty}$ have general terms
\begin{equation} \label{approx}x_n=f\left(\cfrac{p_n}{q_n}\right),\quad y_n=g\left(\cfrac{p_n}{q_n}\right)\quad n=0,1,2,\ldots \ ,\end{equation}
which satisfy
$$\lim_{n\rightarrow\infty}\cfrac{y_n}{x_n}=\lim_{n\rightarrow\infty}\cfrac{g(\frac{p_n}{q_n})}{f(\frac{p_n}{q_n})}=\cfrac{g(\alpha)}{f(\alpha)}=\beta$$
and clearly $(x_n,y_n)\in C$, $\forall n\geq0$. The only conditions that we require are that equation (\ref{alpha}) has irrational solutions and the conic $C$ has a rational point (and in this case it means that it has infinite rational points). In the case of our conics $E=E_{\mathbb R}(h,d)$ we have no problems. Indeed, $(-1,0),(1,0)\in E$ and any point $(x,y)\in E$, has a parametric representation (\ref{para})
$$\begin{cases} x=f(m)=\cfrac{m^2+d}{m^2+hm-d} \cr y=g(m)=\cfrac{2m+h}{m^2+hm-d}\ . \end{cases}$$
In this case equation (\ref{alpha}) becomes
$$\cfrac{2\alpha+h}{\alpha^2+d}=\beta \ ,$$
which has solutions
$$\alpha=\cfrac{1\pm\sqrt{1+\beta^2}}{\beta}\ .$$
and $\alpha$ is always an irrational number.\\
Finally, we consider the interesting case given by $h=0, d=-1$, i.e., the conic $$E=E_{\mathbb R}(0,1)=\{(x,y)\in\mathbb R^2:x^2+y^2=1\}$$
is the unitary circle.
In this case we can construct infinite rational approximations by Pithagorean triples. Indeed, in this case equations (\ref{approx}) give
$$x_n=\cfrac{p_n^2-q_n^2}{p_n^2+q_n^2},\quad y_n=\cfrac{2p_nq_n}{p_n^2+q_n^2}\quad n=0,1,2,\ldots \ ,$$
and
\begin{equation} \label{lim-cerchio} \lim_{n\rightarrow\infty}\cfrac{y_n}{x_n}=\lim_{n\rightarrow\infty}\cfrac{2p_nq_n}{p_n^2-q_n^2}=\beta \ .\end{equation}
Since $(x_n,y_n)\in E$, $\forall n\geq0$, we trivially have
$$(p_n^2-q_n^2)^2+(2p_nq_n)^2=(p_n^2+q_n^2)^2.$$
We conclude this paper with an example of the approximation of $\pi$ over the circle.
\begin{example}
Let us consider $\beta=\pi$ the irrational number that we want to approximate over $E$. We use as auxiliary irrational 
$$\alpha=\cfrac{1+\sqrt{1+\pi^2}}{\pi}\ ,$$
which has the continued fraction expansion
$$\alpha=[1, 2, 1, 2, 1, 1, 3, 1, 1, 5,\ldots]\ .$$
The sequences $(p_n), (q_n)$ which determine the convergents are
$$(1, 3, 4, 11, 15, 26, 93, 119, 212, 1179,\ldots)$$
$$(1, 2, 3, 8, 11, 19, 68, 87, 155, 862,\ldots)\ .$$
By (\ref{lim-cerchio}) the approximations of $\pi$ are
$$\left(\cfrac{12}{5},\cfrac{24}{7},\cfrac{176}{57},\cfrac{165}{52},\cfrac{988}{315},\cfrac{12648}{4025},\cfrac{10353}{3296},\cfrac{65720}{20919},\cfrac{2032596}{646997},\ldots\right)\ ,$$
indeed they are
$$(2.4, 3.4285, 3.0877, 3.1730, 3.1365, 3.1423, 3.1410, 3.1416, 3.1415,\ldots).$$
Furthermore, the points 
$$\left(\cfrac{5}{13},\cfrac{12}{13}\right), \left(\cfrac{7}{25},\cfrac{24}{25}\right),\left(\cfrac{57}{185},\cfrac{176}{185}\right), \left(\cfrac{52}{346},\cfrac{165}{346}\right),\left(\cfrac{315}{1037},\cfrac{988}{1037}\right),\ldots $$
lie on the circle and thus we have the following Pythagorean triples
$$(5,12,13), (7,24,25), (57,176,185), (52,165,346), (315,988,1037),\ldots \ .$$
\end{example}

\end{document}